\documentclass[12pt]{article}
\usepackage{latexsym, amsmath, amsfonts, amsthm, amssymb}
\usepackage{times}
\usepackage{a4wide}
\usepackage{times}
\usepackage{graphicx, tocloft}
\usepackage{mathrsfs}

\def \RR {\mathbb R}

\def \ZZ {\mathbb Z}

\def \eps {\varepsilon}

\def \cM {\mathcal M}

\def \cE {\mathcal E}

\newtheorem{theorem}{Theorem}[section]

\newtheorem{lemma}[theorem]{Lemma}

\newtheorem{proposition}[theorem]{Proposition}

\theoremstyle{definition}

\def\myffrac#1#2 in #3{\raise 2.6pt\hbox{$#3 #1$}\mkern-1.5mu\raise 0.8pt\hbox{$
		#3/$}\mkern-1.1mu\lower 1.5pt\hbox{$#3 #2$}}

\def\qed{\hfill $\vcenter{\hrule height .3mm
		\hbox {\vrule width .3mm height 2.1mm \kern 2mm \vrule width .3mm
			height 2.1mm} \hrule height .3mm}$ \bigskip}

\def \susbeteq {\subseteq}

\begin{document}

\title{A weak version of the $\eps$-Dvoretzky conjecture for normed spaces}
\author{Bo'az Klartag and Tomer Novikov}
\date{}
\maketitle

\abstract{We prove a
weak version of the $\eps$-Dvoretzky conjecture for normed spaces, showing the existence of
a subspace of $\RR^n$ of dimension at least $c \log n / |\log \eps|$ in which the given norm is $\eps$-close to a norm obeying a large discrete group of symmetries (``$1$-unconditional norm'').}

\section{Introduction}

Dvoretzky's theorem \cite{DvoretzkyOriginalAnnouncement, DvoretskyOriginal} is a fundamental result in the theory of high-dimensional normed spaces that was proved
circa 1960. Conjectured earlier by Grothendieck \cite{Grothendieck} and by others (see \cite{DvoretzkyOriginalAnnouncement}),
this theorem is formulated as follows:

\begin{theorem}[Dvoretzky's theorem] Let $||\cdot||$ be a norm in $\mathbb{R}^n$ and let $0 < \eps < 1/2$. Suppose that $k$ is an integer that satisfies
\begin{equation}
n \geq \exp \left(C k  \frac{|\log \eps|}{\eps} \right),
\label{eq_1035} \end{equation}
where $C > 0$ is a certain universal constant. Then there exists a $k$-dimensional subspace $E \subseteq \RR^n$ and $r > 0$ such that for all $x \in E$,
	$$(1-\eps) r |x|\leq||x|| \leq (1+\eps) r |x|$$
	where $| \cdot|$ is the standard Euclidean norm in $\RR^n$.
	\label{thm_1046}
\end{theorem}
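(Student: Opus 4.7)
The proof plan is to use the concentration of measure on the Euclidean sphere. I would set
$$M = \int_{S^{n-1}} \|x\| \, d\sigma(x), \qquad b = \max_{|x|=1} \|x\|,$$
so that $\|\cdot\|$ is $b$-Lipschitz with respect to the Euclidean metric on $\RR^n$. Lévy's concentration inequality on $S^{n-1}$ then gives
$$\sigma\bigl\{ x \in S^{n-1} : \bigl|\|x\| - M\bigr| \geq \delta M \bigr\} \leq C \exp\bigl( -c n \delta^2 (M/b)^2 \bigr)$$
for every $\delta > 0$. The candidate subspace $E$ would be drawn at random from the Haar measure on the Grassmannian $G_{n,k}$, with $r$ taken to be $M$.

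To carry out the random subspace argument, I would fix a realization of $E$, choose an $\eta$-net $\cN \subseteq S_E$ in the Euclidean unit sphere of $E$ of cardinality at most $(C/\eta)^k$, and use the $b$-Lipschitz property to pass from control on $\cN$ to control on all of $S_E$. By rotational invariance, for each fixed direction $x \in S_E$ the marginal of $x$ under Haar-random $E$ is uniform on $S^{n-1}$; a union bound over $\cN$ combined with the concentration inequality yields
$$\Pr\bigl\{\exists\, x \in \cN : \bigl|\|x\| - M\bigr| > (\eps/2) M\bigr\} \leq (C/\eta)^k \exp\bigl( -c n \eps^2 (M/b)^2 \bigr),$$
so that choosing $\eta \sim \eps M/b$ leaves positive probability of success when $k \lesssim \eps^2 n (M/b)^2 / \log(b/(\eps M))$.

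To convert this into the bound in~\eqref{eq_1035}, I would invoke the universal estimate $n (M/b)^2 \geq c \log n$, valid for any norm on $\RR^n$. A clean route is to apply John's theorem to place the unit ball of $\|\cdot\|$ in a canonical position and then compute $M$ from below via a Gaussian reduction, using the classical lower bound $\EE \max_{i \le n} |g_i| \gtrsim \sqrt{\log n}$ for independent standard Gaussians. Combined with the random subspace estimate, this yields a dimension of order $\eps^2 \log n / \log(1/\eps)$, which is essentially the original Milman bound.

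The crucial and most delicate step is upgrading the $\eps$-dependence from $\eps^2$ to $\eps / \log^2(1/\eps)$ as required by~\eqref{eq_1035}. The naive extension from net to sphere through the Lipschitz constant $b$ is too wasteful; I would replace it by a Schechtman-type hierarchical chaining over nets at geometrically decreasing scales, exploiting the subgaussian tail of the fluctuation of $\|\cdot\|$ around $M$ to gain a factor of $1/\eps$ at the cost of logarithmic corrections. This refined multi-scale net argument is the main obstacle and the point at which the proof departs from Milman's original.
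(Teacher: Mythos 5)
Theorem \ref{thm_1046} is not proved in this paper: the estimate (\ref{eq_1035}) is quoted directly from Schechtman \cite{schechtman} and serves as background, not as one of the paper's results. There is therefore no in-paper proof to compare your sketch against. Judged on its own, your outline correctly identifies the Milman framework (L\'evy concentration on $S^{n-1}$, random subspace, $\eta$-net plus union bound, John position plus Dvoretzky--Rogers to bound $M/b$ from below), but it contains two real gaps.

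First, the passage from the net to the sphere as you describe it is flawed. You propose a net at scale $\eta \sim \eps M/b$ and extension via the global Lipschitz constant $b$, arriving at $k \lesssim \eps^2 n (M/b)^2 / \log(b/(\eps M))$. In John position one has $b \leq 1$ and $M \gtrsim \sqrt{\log n / n}$, hence $b/M \lesssim \sqrt{n/\log n}$ and $\log(b/(\eps M))$ is of order $\log n$, not $\log(1/\eps)$; plugging $n(M/b)^2 \sim \log n$ into your formula therefore yields $k \lesssim \eps^2$, independent of $n$, which is vacuous. The standard argument already at this stage uses a net at scale $\eta \sim \eps$ together with the successive-approximation lemma (exploiting that the restriction of $\|\cdot\|$ to $E$ is a norm, hence comparable to itself rather than only to the Euclidean norm through $b$). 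That is what produces the Milman bound $k \lesssim \eps^2 \log n / \log(1/\eps)$, and the step is not optional.

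Second, the point you yourself call ``crucial and most delicate''---upgrading $\eps^2$ to $\eps/\log^2(1/\eps)$---is precisely Schechtman's contribution, and your description of it as ``Schechtman-type hierarchical chaining over nets at geometrically decreasing scales'' is a placeholder rather than an argument. The improvement in \cite{schechtman} is not obtained by a generic multi-scale net bound applied to the same concentration estimate; it requires controlling the oscillation of $\|\cdot\|$ restricted to a random subspace at scale $\eps$ rather than $\eps^2$, which is a genuinely different (and harder) estimate. As written, the proposal identifies where the difficulty lies but does not supply the proof at exactly that point, so (\ref{eq_1035}) does not follow from what you have written.
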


Theorem \ref{thm_1046} can be reformulated as stating that any
centrally-symmetric convex body $K \subseteq \RR^n$
has a central $k$-dimensional section that is nearly spherical.

\medskip The estimate (\ref{eq_1035}) is taken from Paouris and Valettas \cite{PV}, and it improves upon an earlier
bound by Schechtman \cite{schechtman} by a factor that is logarithmic in $\eps$, which in turn improves upon an estimate by Gordon \cite{G} by a factor which is nearly $1 / \eps$. All of these proofs  utilize a highly influential approach
by V. Milman to Dvoretzky's theorem which emphasizes the role
of the concentration of measure phenomena, and which has found applications in several branches of mathematics  \cite{Mil71, ConjectureStatement}.

\medskip However, the dependence on $1/\eps$
in the estimate (\ref{eq_1035}) is exponential.
It was suggested by Milman \cite{MilmanTopology} that the actual dependence on $\eps$ in Dvoretzky's theorem should perhaps be polynomial. This is the conjectural {\it almost-isometric} variant of Dvoretzky's theorem. This conjecture holds true in the case when $k = 2$ (see \cite{MilmanTopology}) or when the norm $\| \cdot \|$ is assumed $1$-symmetric, i.e., when
$$ \left \| (\pm x_{\pi(1)},\ldots,\pm x_{\pi(n)}) \right \|
= \left \| (x_1,\ldots,x_n) \right \| $$
for all vectors $(x_1,\ldots,x_n) \in \RR^n$, for any permutation $\pi \in S_n$ and for any choice of signs.
See Bourgain-Lindenstrauss \cite{Bourgain}, Tikhomirov \cite{Tikhomirov} and Fresen \cite{Fresen} for analysis of the $1$-symmetric case.

\medskip All proofs of Theorem \ref{thm_1046} for general $k$ rely heavily on
probability and analysis, for example through the use of P. L\'evy's concentration inequality for Lipschitz functions on the high-dimensional sphere
\cite{Mil_levy}.
On the other hand, the proof for the case $k = 2$, which is attributed to Gromov by  Milman \cite{MilmanTopology}, is purely topological.
As for the case $k \geq 3$, there is hope that a proof employing topological tools could lead to better dependence on $\eps$ in Theorem \ref{thm_1046}.
See also Burago, Ivanov and Tabachnikov \cite{BBT} for a discussion of possible topological approaches to Dvoretzky's theorem
 and their shortcomings.

\medskip Our main result relies on elementary topological tools in order to make advances towards the almost-isometric variant of Dvoretzky's theorem. Unfortunately, we do not
obtain a full rotational symmetry but only the so-called ``$1$-unconditional symmetries''.
A norm $\| \cdot \|$ in $\RR^n$ is ``unconditional'' with respect to the orthonormal basis $(e_1,\ldots,e_n)$ if for any $x_1,\ldots,x_n \in \RR$ and any choice of signs,
$$ \left \| \sum_{i=1}^n \pm x_i e_i \right \| = \left \|  \sum_{i=1}^n x_i e_i \right \|. $$
Thus the norm $\| \cdot \|$ admits a symmetry group with $2^n$ elements.
We say that a norm defined in a subspace of $\RR^n$ is unconditional if there exists an orthonormal basis in this subspace with respect to which it is unconditional.

\begin{theorem} Let $||\cdot||$ be a norm in $\mathbb{R}^n$ and let $0 < \eps < 1/2$. Suppose that $k \geq 2$ is an integer that satisfies
	\begin{equation}
	n \geq \left(\frac{C}{\eps} \right)^{3 (k-1)}
	\label{eq_1035_} \end{equation}
	for a certain universal constant $C > 0$. Then there exists a $k$-dimensional subspace $E \subseteq \RR^n$ and an unconditional norm $\|| \cdot \|| $ in the subspace $E$ such that for all $x \in E$,
	$$(1-\eps) \|| x \|| \leq||x|| \leq (1+\eps) \||x \||. $$
\label{thm3}
\end{theorem}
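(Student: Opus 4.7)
The plan is to prove Theorem \ref{thm3} by induction on $k$, constructing the orthonormal basis $v_1,\dots,v_k$ of the target subspace $E$ one vector at a time while maintaining a nested family of ``reservoir'' subspaces in which the already-established reflection symmetries persist. More concretely, after $j$ stages I would have orthonormal vectors $v_1,\dots,v_j \in \RR^n$ together with a subspace $G_j \subseteq (\mathrm{span}(v_1,\dots,v_j))^\perp$ whose dimension shrinks by a factor of roughly $(\eps/C)^3$ per step, such that on $F_j := \mathrm{span}(v_1,\dots,v_j)\oplus G_j$ the given norm is invariant, up to cumulative error proportional to $j \eps / k$, under every sign change of the coordinates $v_1,\dots,v_j$ (each reflection fixing the remaining $v_l$ and all of $G_j$). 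The dimension loss of $(\eps/C)^3$ per step together with roughly $k$ iterations is precisely what yields the hypothesis $n \geq (C/\eps)^{3(k-1)}$. At the end, $E := \mathrm{span}(v_1,\dots,v_k)$, and the desired unconditional norm $\||\cdot\||$ on $E$ is obtained by averaging $\|\cdot\|$ over the $2^k$ sign flips in the basis; $\eps$-closeness of $\||\cdot\||$ to $\|\cdot\|$ then follows from the accumulated invariance bound via the triangle inequality.

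The inductive step is supplied by a key lemma: given a norm $\|\cdot\|$ on a subspace $F$ of dimension $N$ and a low-dimensional ``committed'' subspace $E' \subseteq F$, there exist a unit vector $v \in F \cap (E')^\perp$ and a subspace $G' \subseteq F \cap (E')^\perp \cap v^\perp$ of dimension at least $N\cdot(\eps/C)^3$ such that
\[
\bigl|\,\|e + av + w\| - \|e - av + w\|\,\bigr|\ \leq\ \delta\,\|e+av+w\|
\qquad\text{for all }e\in E',\ a\in\RR,\ w\in G',
\]
with $\delta$ a small fixed fraction of $\eps$. In other words, one must locate a direction in which the norm admits an approximate reflection symmetry uniformly over a large transverse subspace and over the already-committed $E'$.

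To prove the key lemma I would consider the ``asymmetry'' functional $\phi_v(w) := \|v+w\| - \|-v+w\|$, which is odd in $w$ by central symmetry of $\|\cdot\|$, and search for $v$ on which $\phi_v$ vanishes uniformly on a large-dimensional subspace of $v^\perp$. The plan has three ingredients: (i) a topological (Borsuk-Ulam or degree-theoretic) input, in the spirit of the $k=2$ case attributed to Gromov in the introduction, producing a $v$ at which $\phi_v$ has an approximate zero on a suitable prescribed low-dimensional subspace; (ii) an $\eps$-net argument on the unit sphere of that subspace combined with the Lipschitz continuity of $\phi_v$, upgrading pointwise near-vanishing to the uniform bound stated in the lemma --- this is where the polynomial (rather than exponential) dependence on $1/\eps$ is gained and where the cubic exponent $3$ should arise from the combinatorial tradeoff between net cardinality and Lipschitz constant; and (iii) a pigeonhole step to absorb the committed subspace $E'$, whose effect on $\phi_v$ is a bounded Lipschitz perturbation that can be controlled because $\dim E'$ is small compared to the dimension budget. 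The main obstacle is step (i): concentration-of-measure arguments in the style of Milman \cite{Mil71} inevitably produce exponential dependence on $\eps$, so a topological replacement is essential, and the delicate point is setting up a parameter space whose dimension and topology admit a Borsuk-Ulam-type zero-finding argument yielding the required approximate reflection symmetry uniformly over $E'$.
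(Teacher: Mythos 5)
Your proposal diverges fundamentally from the paper's proof and contains a serious gap at the heart of the argument. The paper does not iterate dimension by dimension; it works globally. The proof approximates the norm $\|\cdot\|_U$ on each candidate $k$-frame $U \in W_{n,k}$ by a homogeneous polynomial of degree $2d$ in $k$ variables (Barvinok's construction, yielding a positive-definite matrix $A(U)$ on $\RR^{\cM_d}$ with $\|x\|_U^{2d}$ sandwiched between $\langle A(U) x^{\otimes d}, x^{\otimes d}\rangle$ and $|\cM_d|$ times that). Unconditionality of the approximating polynomial is then equivalent to the vanishing of a finite collection of matrix entries $A_{\alpha,\beta}(U)$ indexed by pairs with $\alpha+\beta \notin 2\ZZ^k$. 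One thus obtains a continuous $\{\pm 1\}^k$-equivariant map from $W_{n,k}$ into $\RR^\ell$ with $\ell \leq n-k$, landing in a fixed-point-free representation, and a Borsuk--Ulam type theorem for Stiefel manifolds (Proposition \ref{prop_154}) forces a zero. The polynomial approximation is precisely the device that collapses the a priori infinite-dimensional requirement ``the norm is approximately unconditional on a $k$-dimensional subspace'' down to ``finitely many real numbers vanish,'' and it is this collapse that makes the dependence on $1/\eps$ polynomial.

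The gap in your proposal is the key lemma. You want, at each inductive step, a direction $v$ and a transverse subspace $G'$ of dimension $\gtrsim N(\eps/C)^3$ on which the reflection $v \mapsto -v$ is an approximate symmetry \emph{uniformly} over $E' \oplus \RR v \oplus G'$. Since $\dim G'$ is a fixed power-of-$\eps$ fraction of $N$, it is still polynomially large in $n$, and any $\eps$-net on a sphere of that dimension has cardinality exponential in $\dim G'$; the net-plus-Lipschitz upgrade in your step (ii) therefore cannot yield the claimed polynomial dependence, and you have not exhibited any topological input in step (i) that delivers uniform near-vanishing on such a large subspace. This is not an oversight you can patch locally: uniform control over a large transverse subspace is exactly the bottleneck that concentration-of-measure handles with an exponential price and that the paper sidesteps entirely by replacing the norm with a low-degree polynomial surrogate whose symmetry is encoded in a bounded number of coefficients. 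Your step (iii) (absorbing $E'$ by pigeonhole) also remains a sketch with no mechanism. As written, your argument reduces Theorem \ref{thm3} to an unproven lemma that is at least as hard as the theorem itself, so the proposal does not constitute a proof.
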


While the dependence on $1/\eps$ in Theorem \ref{thm3}
is only polynomial, as desired, the exponent $3(k-1)$ in (\ref{eq_1035_}) is non-optimal.
It may be replaced by $\alpha (k-1)$ for any $\alpha > 2$
at the expense of modifying the value of the universal constant $C > 0$ from Theorem \ref{thm3}, as can be seen from the proof.
It is likely possible to replace the ``unconditional'' symmetries of the norm $\|| \cdot \||$ in Theorem \ref{thm3} by other commutative groups of symmetries, such as cyclic permutations of the coordinates.
See also Makeev \cite{makeev} and Burago, Ivanov and Tabachnikov \cite{BBT}.
However, we do not know how to obtain a discrete group of symmetries such as the group of permutations $S_n$, or its $2$-Sylow subgroup, while keeping the dependence on $1 / \eps$ polynomial.

\medskip Theorem \ref{thm_1046} and Theorem \ref{thm3} should be compared with the results
of Alon and Milman \cite{AM} that show in particular that any normed space of sufficiently high dimension contains a $k$-dimensional subspace that is $\eps$-close
either to $\ell_2^k$ or to $\ell_{\infty}^k$.
The dependence on $\eps$ here is  worse than that of Theorem \ref{thm3}, and it seems to us that the results of \cite{AM} are not directly applicable in the case where, say, $\eps \leq 1 / \log^2 n$
where $n$ is the dimension of the given normed space. On the other hand, when $\eps$ is a fixed constant that is not allowed to depend on the dimension, 
Alon and Milman \cite{AM} yield a stronger conclusion than that of Theorem \ref{thm3}, allowing $k$ to be as large as $\exp(c \sqrt{\log n})$
and yielding a subspace isomorphic to $\ell_2^k$ or to $\ell_{\infty}^k$.
The argument from \cite{AM} was simplified by Talagrand \cite{Tal} and the results of \cite{AM} were used by Schechtman \cite{schechtman} and by
Paouris and Valettas \cite{PV} in their proof of Theorem \ref{thm_1046}.

\medskip In this paper we write $\langle \cdot, \cdot \rangle$
for the standard Euclidean product in $\RR^n$, and $|x| = \sqrt{\langle x, x \rangle}$. By $\log$ we refer to the natural logarithm. Throughout this text we use the letters $c, C, \tilde{C}$ etc. to denote positive universal constants, that may be explicitely computed in principle, whose value may change from one line to the next.

\medskip {\it Acknowledgements.}
Supported by a  grant from the Israel Science Foundation (ISF).
This paper is based on an M.Sc. thesis written by the second named author under the supervision of the first named author
at the Weizmann Institute of Science. We thank Roman Karasev for noting an error in an earlier version of this manuscript.

\section{Unconditional symmetries}
\label{sec3}

Consider the group $(\ZZ / 2 \ZZ)^k \cong \{ \pm 1 \}^k$.
This group acts on the Stiefel manifold $W_{n,k}$ of $k$-frames in $\RR^n$ by switching the signs of the frame vectors, i.e.,
\begin{equation}  g.(U_1,\ldots,U_k) = (g_1 U_1,\ldots, g_k U_k) \qquad \text{for} \ (U_1,\ldots,U_k) \in W_{n,k}, (g_1,\ldots,g_k) \in \{ \pm 1 \}^k \label{eq_1148} \end{equation}
where $U_1,\ldots,U_k \in \RR^n$ is a $k$-frame.
A linear action of a group $G$ on $\RR^{\ell}$ (i.e., a
representation) is {\it fixed-point-free} if there is no vector $0 \neq x \in \RR^{\ell}$
with $g.x = x$ for all $g \in G$.

\begin{proposition} Consider any fixed-point-free
	representation of $G = \{ \pm 1 \}^k$ in $\RR^{\ell}$ for $\ell \leq n-k$.
	Then any continuous, $G$-equivariant map  $F: W_{n,k} \rightarrow \RR^{\ell}$ has to vanish somewhere in $W_{n,k}$.
	\label{prop_154}
\end{proposition}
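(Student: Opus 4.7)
My plan is to argue by contradiction using mod-$2$ Stiefel--Whitney classes on the quotient manifold $X := W_{n,k}/G$. First observe that $G = \{\pm 1\}^k$ acts freely on $W_{n,k}$: any stabilizing element $g$ satisfies $g_i U_i = U_i$ and hence $g_i = 1$, since $U_i \neq 0$. So $X$ is a smooth manifold. Suppose toward a contradiction that $F$ has no zero. Then $F$ is a nowhere-vanishing $G$-equivariant section of the trivial $G$-bundle $W_{n,k} \times \RR^\ell$, which descends to a nowhere-vanishing section of the associated real rank-$\ell$ vector bundle
$$ E \;:=\; W_{n,k} \times_G \RR^\ell \;\longrightarrow\; X. $$
Standard characteristic-class theory then forces the top Stiefel--Whitney class to vanish: $w_\ell(E) = 0$ in $H^\ell(X; \mathbb{F}_2)$.

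To identify $w_\ell(E)$, I decompose the $G$-module $\RR^\ell$ into characters. Since $G$ is elementary abelian, $\RR^\ell \cong \bigoplus_{j=1}^{\ell} \RR_{\chi_{A_j}}$, where $\chi_{A}(g) := \prod_{i \in A} g_i$, and the fixed-point-free hypothesis means each $A_j \subseteq \{1, \ldots, k\}$ is nonempty. Let $L_i$ be the real line bundle on $X$ whose fiber over $[U_1, \ldots, U_k]$ is $\RR U_i$, and set $\alpha_i := w_1(L_i) \in H^1(X; \mathbb{F}_2)$. Since tensoring real line bundles adds first Stiefel--Whitney classes modulo $2$, the character line bundle associated to $\chi_{A_j}$ has $w_1 = \sum_{i \in A_j} \alpha_i$, and hence
$$ w_\ell(E) \;=\; \prod_{j=1}^{\ell} \left( \sum_{i \in A_j} \alpha_i \right), $$
a product of $\ell$ nonzero $\mathbb{F}_2$-linear forms in $\alpha_1, \ldots, \alpha_k$.

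The main task is to show this product is nonzero in $H^*(X; \mathbb{F}_2)$. I plan to iterate Leray--Hirsch along the tower $X_m \to X_{m-1}$, where $X_m := W_{n,m} / \{\pm 1\}^m$. The induced projection is a fiber bundle with fiber $\RR P^{n-m}$ (obtained by quotienting the sphere fiber of $W_{n,m} \to W_{n,m-1}$ by the last $\{\pm 1\}$-factor, which acts on fibers antipodally), and $L_m$ restricts on each fiber to the tautological line bundle on that projective space, so $1, \alpha_m, \ldots, \alpha_m^{n-m}$ restrict to a basis of the fiber cohomology. Iterating this from $m = 1$ up to $m = k$ produces an $\mathbb{F}_2$-vector-space basis of $H^*(X; \mathbb{F}_2)$ consisting of monomials $\alpha_1^{a_1} \cdots \alpha_k^{a_k}$ with $0 \leq a_i \leq n - i$.

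Since $\ell \leq n - k$, every monomial of total degree $\ell$ has each exponent $a_i \leq \ell \leq n - k \leq n - i$ and so lies in the basis range; consequently the graded reduction map $\mathbb{F}_2[\alpha_1, \ldots, \alpha_k]_{\leq n-k} \to H^*(X; \mathbb{F}_2)$ is injective. Because $\mathbb{F}_2[\alpha_1, \ldots, \alpha_k]$ is an integral domain, the product $\prod_{j=1}^\ell \bigl(\sum_{i \in A_j} \alpha_i\bigr)$ is a nonzero polynomial of degree $\ell$, hence remains nonzero in $H^\ell(X; \mathbb{F}_2)$ --- contradicting $w_\ell(E) = 0$. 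The principal technical obstacle is the third step: setting up the Leray--Hirsch tower cleanly and verifying that the fiber of each $X_m \to X_{m-1}$ really is $\RR P^{n-m}$ with $L_m$ restricting to its tautological line bundle; once that identification is secure, the remaining conclusion is linear algebra over $\mathbb{F}_2$.
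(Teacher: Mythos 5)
Your proposal is correct, and it is a genuinely different argument from the paper's. Indeed, you have supplied essentially the proof the authors speculate about immediately after the Proposition ("We guess that there should be an elegant algebraic-topology proof of Proposition \ref{prop_154}, perhaps using Stiefel-Whitney classes"). The paper's own route is test-map--theoretic: it enlarges the given $\ell$-dimensional representation to an $N$-dimensional one with $N = \dim W_{n,k}$, explicitly writes down a smooth $G$-equivariant map $f: W_{n,k} \to \RR^N$ whose zero set is a single free $G$-orbit with $0$ a regular value, and then invokes \cite[Theorem~2.1]{K_waist}. Your route avoids the auxiliary enlargement and the regular-value verification entirely: you pass to $X = W_{n,k}/G$, observe that a nowhere-zero equivariant $F$ would give a nowhere-zero section of $E = W_{n,k} \times_G \RR^{\ell}$ and hence force $w_{\ell}(E) = 0$, decompose $\RR^{\ell}$ into nontrivial characters $\chi_{A_j}$ so that $w_{\ell}(E) = \prod_j \bigl(\sum_{i\in A_j}\alpha_i\bigr)$, and then use the iterated Leray--Hirsch description of $H^*(X;\mathbb{F}_2)$ (free on monomials $\alpha_1^{a_1}\cdots\alpha_k^{a_k}$ with $0\le a_i\le n-i$) to see that every nonzero polynomial in $\mathbb{F}_2[\alpha_1,\ldots,\alpha_k]$ of total degree $\le n-k$ survives in cohomology. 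Since each linear factor is nonzero (the $A_j$ are nonempty by the fixed-point-free hypothesis) and $\mathbb{F}_2[\alpha_1,\ldots,\alpha_k]$ is a domain, the product is a nonzero class of degree $\ell \le n-k$, contradiction. What you gain is conceptual clarity and the standard machinery of characteristic classes; what the paper's approach gains is elementarity (no Leray--Hirsch, just an explicit map and a local regularity check) plus a self-contained reduction to a cited degree-type theorem. All the steps in your sketch --- the identification $L_i \cong W_{n,k}\times_G \RR_{\chi_{\{i\}}}$, the fiber of $X_m\to X_{m-1}$ being $\RR P^{n-m}$ with $L_m$ restricting to the tautological bundle, and the degree count $a_i \le \ell \le n-k \le n-i$ --- check out, so the only work remaining is to write out the Leray--Hirsch induction carefully, which is routine.
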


	The difference between Proposition \ref{prop_154} and
Theorem 1.1
in Chan, Chen, Frick and Hull \cite{frick}, is that the respresentation in $\RR^{\ell}$ can be arbitrary, as long as it is fixed-point-free. We guess that there should be an elegant algebraic-topology proof of Proposition \ref{prop_154}, perhaps using Stiefel-Whitney classes.

\begin{proof}[Proof of Proposition \ref{prop_154}]
For a non-empty subset $A \subseteq \{1, \ldots,k \}$ we  define the one-dimensional representation
$$ w_A(g) = \prod_{i \in A} g_i \qquad \qquad \text{for} \ g = (g_1,\ldots,g_k) \in \{ \pm 1 \}^k, $$
where the linear action on $\RR$ is given by  $g.x = w_A(g)  x$ for $x \in \RR, g \in G$.
Any fixed-point-free, irreducible representation
of the abelian group $G = \{ \pm 1 \}^k$ is isomorphic to one of these $2^k - 1$ one-dimensional representations.
 Any finite-dimensional representation
of $G$ splits into a direct sum of irreducible representations. Thus every  fixed-point-free, finite-dimensional representation of $\{ \pm 1 \}^k$ is characterized by a formal sum
\begin{equation}  \tau = \sum_{\emptyset \neq A \subseteq \{1, \ldots,k \}} m_A \cdot A \label{eq_1718} \end{equation}
for non-negative integers $m_A$, which count the number of times that each irreducible representation occurs in the given finite-dimensional representation. Note that the dimension of the representation is $|\tau| := \sum_A |m_A|$.	

\medskip Recall that  we are given a certain fixed-point-free representation of the group $G$ in $\RR^{\ell}$. Write the formal sum corresponding to this representation as
$$ S_1 + S_2 + \ldots + S_k $$
where $S_i$ is the part of the formal sum that includes all subsets $A \subseteq \{ 1, \ldots, k \}$, where the maximal element of $A_{i,j}$ is precisely  $i$. Since $\ell \leq n-k$ we have
$$ |S_i| \leq n-k \qquad \qquad \qquad \text{for} \ i=1,\ldots,k. $$
Denote $N = nk - k(k+1)/2 = \dim(W_{n,k})$.
Consider a representation of $G$ in the space $\RR^N$ corresponding to the formal sum
\begin{equation}  \tilde{\tau} = \sum_{i=1}^k (n-i-|S_i|) \cdot \{ i \} + \sum_{i=1}^k S_i. \label{eq_436} \end{equation}
This representation in the space $\RR^N$ has an invariant subspace isomorphic to the representation in the space $\RR^{\ell}$ that is given to us. Hence it suffices to prove that any continuous, $G$-equivariant map from $X$ to $\RR^N$, vanishes somewhere in $W_{n,k}$. In view of \cite[Theorem 2.1]{K_waist}, it suffices to
construct a smooth, $G$-equivariant map $f: W_{n,k} \rightarrow \RR^{N}$, of which zero is a regular value,
such that $f^{-1}(0)$ is an orbit of $G$. The function $f$ that we will construct takes the form
$$ f = (f_{i,j})_{1 \leq i \leq k, 1 \leq j \leq n-i} $$
for scalar functions $f_{i,j}: W_{n,k} \rightarrow \RR$.
These scalar functions are defined as follows: For $1 \leq i \leq k$ and $|S_i| + 1 \leq j \leq n-i$ we set
\begin{equation}  f_{i,j}(U) = U_{i,i+j} \qquad \qquad \qquad (U \in W_{n,k}) \label{eq_434_} \end{equation}
where $U = (U_1,\ldots,U_k)$ is a $k$-frame in $\RR^n$, and $U_i = (U_{i,1},\ldots,U_{i,n}) \in \RR^n$. We still need to define $f_{i,j}$ for $1 \leq j \leq |S_i|$. Let us write
$$ S_i = \sum_{j=1}^{|S_i|} A_{i,j} $$
for non-empty subsets $A_{i,j} \subseteq \{1,\ldots,k \}$
whose maximal element  equals $i$.
	For $1 \leq j \leq |S_i|$ we set
\begin{equation} f_{i,j}(U) = U_{i,i+j} \cdot \prod_{r \in A_{i,j} \setminus \{ i \}} U_{r,r}. \label{eq_434} \end{equation}
This completes the definition of the smooth map $f: W_{n,k} \rightarrow \RR^N$. Recalling (\ref{eq_1148}),
we observe that the map $f$ is
indeed $G$-equivariant; its coordinates $f_{i,j}$ that
are defined in (\ref{eq_434_}) correspond to the first summand in (\ref{eq_436}), while its coordinates that are defined in (\ref{eq_434}) correspond to the second summand.

\medskip Let us now describe the zero set of $f$.
Suppose that $U \in W_{n,k}$ satisfies $f(U) = 0$.
The fact that $f_{1,j}(U) = 0$ for all $1 \leq j \leq n-1$ implies that $$ U_1 = e_1 = (\pm 1,0,\ldots,0). $$
Similarly, the facts that $f_{2,j}(U) = 0$ for all $1 \leq j \leq n-2$ and that $U_2 \perp U_1$ imply that $U_2 = e_2 = (0, \pm 1,0,\ldots,0)$.
By a straightforward induction argument,
we see that $U_j = \pm e_j$, where $e_1,\ldots,e_n \in \RR^n$ are the standard unit vectors. Thus $$ f^{-1}(0) = \{ (\delta_1 e_1,\ldots, \delta_k e_k) \, ; \, \delta_i = \pm 1 \ \text{for} \ i=1,\ldots,k \} \subseteq W_{n,k}. $$ We see that  $f^{-1}(0)$ is a set of size $2^k$ which is an orbit of $G$. We need to explain why zero is a regular value of $f$. To this end we consider a
smooth regular curve $U(t) \in W_{n,k}$, defined for $t \in (-1,1)$  with $U(0)  \in f^{-1}(0)$. Note that the derivatives
with respect to the variable $t$, denoted by $\dot{U}_{1,1}(t),\ldots,\dot{U}_{k,k}(t)$ all vanish for $t = 0$, because $|U_{i,i}(0)| = 1 \geq |U_{i,i}(t)|$ for all $i$ and $t$. A crucial observation is that the derivative at $t =0$ of the vector
$$ (U_{i,i+j}(t))_{1 \leq i \leq k, 1 \leq j \leq n-i}  \in \RR^N $$
does not vanish, because $\dot{U}(0) \neq 0$ and because $U(t) \in W_{n,k}$ for all $t$. It thus follows from (\ref{eq_434_}), (\ref{eq_434})
and the Leibnitz rule for differentiation that  $\frac{d}{dt} f(U(t))$ does not vanish for $t =0$, as required.
We have thus verified all of the requirements from
\cite[Theorem 2.1]{K_waist}, thereby completing the proof.
\end{proof}

Next we describe the elegant method from Barvinok \cite{barvinok} for approximating a norm by a homogeneous polynomial taken to some power. Fix $k \geq 1$.
We refer to a vector $\alpha = (\alpha_1,\ldots,\alpha_k) \in \ZZ_{\geq 0}^k$ of non-negative integers as a {\it multi-index}. The order of the multi-index $\alpha = (\alpha_1,\ldots,\alpha_k)$ is
$$ |\alpha| = \sum_{i=1}^k \alpha_i. $$
The collection of multi-indices of order $d$ is denoted by $\cM_d$. It is a set of cardinality ${d + k-1 \choose k-1}$. For $x \in \RR^k$ and $\alpha \in \cM_d$ we define
\begin{equation}  x^\alpha = \prod_{i=1}^k x_i^{\alpha_i} \in \RR.
\label{eq_1107} \end{equation}
For an integer $d \geq 1$ and $x \in \RR^k$ we set
$$ x^{\otimes d} = \left( x^\alpha \right)_{|\alpha| = d }  \in \RR^{\cM_d} $$
(the reason for this notation is  that the linear space  $\RR^{\cM_d}$ may be identified with the symmetric tensor product of order $d$ of $\RR^k$).  Note that $(-x)^{\otimes d} = (-1)^d \cdot x^{\otimes d}$.
We use $(b_{\alpha})_{\alpha \in \cM_d}$ as the coordinates of a vector $b \in \RR^{\cM_d}$, thus for instance we may write
$$ (x^{\otimes d})_{\alpha} = x^{\alpha} \qquad (x \in \RR^k, \alpha \in \cM_d). $$ For $a,b \in \RR^{\cM_d}$ we define the scalar product
$$ (a,b) = \sum_{\alpha \in \cM_d} {d \choose \alpha} a_{\alpha} b_{\alpha} $$
where ${d \choose \alpha} = d! / \prod_{i=1}^k \alpha_i!$ is a multinomial coefficient. Note that for any $x, y \in \RR^k$,
by the multinomial theorem,
\begin{equation}  (x^{\otimes d}, y^{\otimes d})
= \sum_{\alpha \in \cM_d} {d \choose \alpha} x^{\alpha} y^{\alpha} = \left( \sum_{i=1}^k x_i y_i \right)^d = \langle x,y \rangle^d. \label{eq_425} \end{equation}
 Suppose that $d$ is an odd, positive integer. Given a norm $\| \cdot \|$ in $\RR^k$, we consider the dual norm
$$ \| x \|_* = \sup_{0 \neq z \in \RR^k} \frac{\langle z, x \rangle}{\| z \|} $$
and the convex set
\begin{equation}  K_{\| \cdot \|} = conv \left \{ x^{\otimes d} \, ; \, x \in \RR^k, \ \| x \|_* \leq 1 \right \}, \label{eq_230} \end{equation}
where $conv$ denotes the convex hull of a set.
The set $K = K_{\| \cdot \|}
\subseteq \RR^{\cM_d}$ is a centrally-symmetric convex body with a non-empty interior, as explained in \cite{barvinok}.
Note that by (\ref{eq_425}) and (\ref{eq_230}), for any $x \in \RR^k$,
\begin{equation}  \| x \|^d = \sup_{\| y \|_* \leq 1} \langle x,y \rangle^d  =
\sup_{\| y \|_* \leq 1} (x^{\otimes d},y^{\otimes d})
= \sup_{a \in K} (x^{\otimes d},a). \label{eq_433} \end{equation}
Among all centrally-symmetric ellipsoids that are contained in $K = K_{\| \cdot \|}$, there is a unique ellipsoid of maximal volume (see e.g. \cite[Corollary 3.7]{pisier}). We denote this maximal-volume ellipsoid by $\cE = \cE_{\| \cdot \|} \subseteq \RR^{\cM_d}$. By the John theorem (see e.g. \cite[Chapter 3]{pisier}),
\begin{equation} \cE \subseteq K \susbeteq \sqrt{|\cM_d|} \cdot \cE, \label{eq_1103} \end{equation}
where $|\cM_d|$ is the cardinality of the set $\cM_d$.
Since $\cE$ is a centrally-symmetric ellipsoid in $\RR^{\cM_d}$ and since $(\cdot, \cdot)$ is an inner product, the expression  $ \sup_{a \in \cE} (a, b)^2$ is a quadratic function of $b \in \RR^{\cM_d}$ that is positive for all $0 \neq b \in \RR^{\cM_d}$. Consequently, there exists
a positive-definite, symmetric matrix $$ A = (A_{\alpha ,\beta})_{\alpha, \beta \in \cM_d} \in \RR^{\cM_d \times \cM_d} $$ which satisfies
\begin{equation}
\sum_{\alpha, \beta \in \cM_d} A_{\alpha, \beta} b_{\alpha} b_{\beta} = \sup_{a \in \cE} (a, b)^2 \qquad \qquad \text{for all} \ b \in \RR^{\cM_d}. \label{eq_1121} \end{equation}
 The symmetric matrix $A = A_{\| \cdot \|}$ is uniquely
 determined by the ellipsoid $\cE = \cE_{\| \cdot \|}$.
As in Barvinok \cite{barvinok}, we conclude from (\ref{eq_433}), (\ref{eq_1103}) and (\ref{eq_1121}) that
\begin{equation}   \sum_{\alpha, \beta \in \cM_d} A_{\alpha, \beta} x^{\alpha} x^{\beta} \leq \| x \|^{2d} \leq |\cM_d| \cdot \sum_{\alpha, \beta \in \cM_d} A_{\alpha, \beta} x^{\alpha} x^{\beta}. \label{eq_446}
\end{equation}
Observe also that $x^{\alpha} x^{\beta} = x^{\alpha + \beta}$ for all $x \in \RR^k, \alpha, \beta \in \cM_d$.
According to (\ref{eq_446}), the given norm $\| \cdot \|$ in $\RR^k$, taken to the power $2d$, is approximated by a $2d$-homogeneous polynomial in $\RR^k$.

\begin{lemma}
	The matrix $A = A_{\| \cdot \|}$ varies continuously with the norm $\| \cdot \|$, where we equip the space of norms on $\RR^k$ with the topology of uniform convergence on $S^{k-1}$. Moreover, for $\delta \in \{ \pm 1 \}^k$ and a norm $\| \cdot \|$ on $\RR^k$, denote
	$$ \| x \|_{\delta} = \| (\delta_1 x_1,\ldots, \delta_k x_k) \| \qquad \qquad \text{for} \ x \in \RR^k. $$
	Then the symmetric matrices  $A_{\| \cdot \|} = (A_{\alpha, \beta})_{\alpha, \beta \in \cM_d}$ and
	$A_{\| \cdot \|_{\delta}} = (A_{\alpha, \beta}(\delta))_{\alpha, \beta \in \cM_d}$ satisfy
	\begin{equation}  A_{\alpha, \beta}(\delta) = \delta^{\alpha + \beta} A_{\alpha, \beta}
\qquad \text{for} \ \alpha, \beta \in \cM_d, \label{eq_1133}
	\end{equation} \label{lem_1209}
where we recall that $\delta^{\alpha + \beta} = \prod_{i=1}^k \delta_i^{\alpha_i + \beta_i}$.
\end{lemma}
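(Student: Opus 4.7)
My plan is to deduce both parts of Lemma \ref{lem_1209} from the observation that the entire construction --- dual norm, convex body $K_{\|\cdot\|}$, John ellipsoid, and matrix $A_{\|\cdot\|}$ --- is natural with respect to sign-flip symmetries, combined with uniqueness and continuity of the John ellipsoid.

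For the transformation formula, first I would introduce the involution $T_\delta: \RR^k \to \RR^k$ with $T_\delta x = (\delta_1 x_1,\ldots,\delta_k x_k)$, so that $\|x\|_\delta = \|T_\delta x\|$ by definition. Since $T_\delta$ is symmetric and equals its own inverse, substitution in the definition of the dual norm gives $\|y\|_{\delta,*} = \|T_\delta y\|_*$; that is, passing to the dual commutes with the sign-flip operation. Next I would lift $T_\delta$ to the involution $\tilde T_\delta: \RR^{\cM_d} \to \RR^{\cM_d}$ defined by $(\tilde T_\delta b)_\alpha = \delta^\alpha b_\alpha$, and record three facts: (i) $(T_\delta y)^{\otimes d} = \tilde T_\delta(y^{\otimes d})$, immediate from (\ref{eq_1107}); (ii) $\tilde T_\delta$ is an isometry of the inner product $(\cdot,\cdot)$ on $\RR^{\cM_d}$, since $(\delta^\alpha)^2 = 1$ for every $\alpha$; and (iii) $\tilde T_\delta$ is a signed diagonal map and therefore preserves Lebesgue measure. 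Combining (i) with (\ref{eq_230}) and the dual commutation yields the body-level identity $K_{\|\cdot\|_\delta} = \tilde T_\delta(K_{\|\cdot\|})$. Because $\tilde T_\delta$ is a volume-preserving linear map, uniqueness of the maximal-volume inscribed ellipsoid forces $\cE_{\|\cdot\|_\delta} = \tilde T_\delta(\cE_{\|\cdot\|})$. Substituting into (\ref{eq_1121}) and using $(\tilde T_\delta a', b) = (a', \tilde T_\delta b)$ (a consequence of (ii)), I obtain
\[ \sum_{\alpha,\beta} A_{\alpha,\beta}(\delta) \, b_\alpha b_\beta = \sup_{a' \in \cE_{\|\cdot\|}} (a', \tilde T_\delta b)^2 = \sum_{\alpha,\beta} A_{\alpha,\beta} \, \delta^{\alpha+\beta} b_\alpha b_\beta, \]
and identifying the coefficients of this symmetric quadratic form in $b$ delivers (\ref{eq_1133}).

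For continuity I argue along the chain: norm $\to$ dual norm $\to$ dual unit ball in $\RR^k$ $\to$ its image under the continuous map $x \mapsto x^{\otimes d}$ $\to$ convex hull $K_{\|\cdot\|}$ $\to$ John ellipsoid $\cE_{\|\cdot\|}$ $\to$ quadratic form in (\ref{eq_1121}) $\to$ matrix $A_{\|\cdot\|}$. Uniform convergence of norms on $S^{k-1}$ transfers to uniform convergence of dual norms, hence to Hausdorff convergence of the dual unit balls; composition with the continuous map $x \mapsto x^{\otimes d}$ together with Hausdorff continuity of convex hulls gives Hausdorff convergence of $K_{\|\cdot\|}$. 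The only nontrivial link --- and the main obstacle --- is Hausdorff continuity of the John map on centrally-symmetric convex bodies with non-empty interior; this I would handle by a standard compactness argument: any subsequential Hausdorff limit of $\cE_{\|\cdot\|_n}$ is a centrally symmetric ellipsoid inscribed in the limit body, and continuity of volume together with uniqueness of John force that limit to coincide with $\cE_{\|\cdot\|}$, so the whole sequence converges. The final passage from $\cE$ to $A$ via (\ref{eq_1121}) is manifestly continuous, completing the proof.
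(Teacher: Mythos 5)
Your proposal is correct and follows essentially the same route as the paper: both deduce the body-level identity $K_{\|\cdot\|_\delta} = \tilde T_\delta(K_{\|\cdot\|})$, pass to the John ellipsoid via uniqueness, and read off the transformation of $A$ from (\ref{eq_1121}); for continuity both trace the chain norm $\to$ dual $\to$ $K$ $\to$ $\cE$ $\to$ $A$, with your compactness argument for Hausdorff continuity of the John ellipsoid being the standard proof of the abstract uniqueness-of-minimizer lemma that the paper invokes. Your write-up is somewhat more explicit in recording that $\tilde T_\delta$ is volume-preserving (needed for the John step) and self-adjoint for $(\cdot,\cdot)$ (needed to move it across the supremum in (\ref{eq_1121})), details the paper compresses into the word ``congruent.''
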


\begin{proof} Let us continuously vary the norm $\| \cdot \|$.
	Then the dual norm $\| \cdot \|$ also varies
	continuously, and the convex body $K = K_{\| \cdot \|}
	\subseteq \RR^{\cM_d}$
	varies continuously with respect to the Hausdorff metric. Since the maximal volume ellipsoid $\cE = \cE_{\| \cdot \|}$ is uniquely determined, it also varies continuously with respect to the Hausdorff metric. This follows from  the fact that if $f(x,y)$ is a continuous function of two variables in a compact metric space, and $\min_y f(x,y)$ is uniquely attained for any $x$ at a point $y_0(x)$, then $y_0(x)$ is a continuous function of $x$. The symmetric matrix $A$ is determined by $\cE$ through (\ref{eq_1121}), and it is elementary to verify its continuity. Next, for any $\delta \in \{\pm 1 \}^k$
	the convex set $ K_{\| \cdot \|_{\delta}}$ is the image of $K_{\| \cdot \|}$ under the linear map
	\begin{equation}  (y_{\alpha})_{\alpha \in \cM_d} \mapsto 	(\delta^{\alpha} y_{\alpha})_{\alpha \in \cM_d}. \label{eq_1130} \end{equation}	
	Similarly, the ellipsoid $\cE_{\| \cdot \|_{\delta}}$ is the image
of the ellipsoid $\cE_{\| \cdot \|}$ under the linear map in (\ref{eq_1130}). Hence the matrix $A_{\| \cdot \|_{\delta}}$ is
	congruent to the matrix $A_{\| \cdot \|}$ via the linear transformation (\ref{eq_1130}), and relation (\ref{eq_1133}) holds true.
\end{proof}

Consider the lexicographic order on $\cM_d$. That is,
for two distinct multi-indices $\alpha, \beta \in \cM_d$
let $i_0 \in \{ 1,\ldots,k \}$ be the minimal index with $\alpha_{i_0} \neq \beta_{i_0}$. We write that $\alpha < \beta$ if $\alpha_{i_0} < \beta_{i_0}$. It is easy to verify that $<$ is a linear order relation. Consider the subset
$$ E_d = \left \{ (\alpha, \beta) \, ; \, \alpha, \beta \in \cM_d, \ \alpha < \beta, \ \alpha + \beta \not \in 2 \ZZ^k \right \} \subseteq \cM_d \times \cM_d, $$
where $2 \ZZ^k$ is the collection of all vectors of length $k$ whose coordinates are even integers.

\begin{proof}[Proof of Theorem \ref{thm3}]
	For $U = (U_1,\ldots,U_k) \in W_{n,k}$ with $U_1,\ldots,U_k \in \RR^n$ being a $k$-frame, we define
	the norm
	$$ \| x \|_{U} = \left \| \sum_{i=1}^k x_i U_i \right \| \qquad \qquad \qquad \text{for} \ x \in \RR^k. $$
Let $d$ be an odd, positive integer such that
\begin{equation} \frac{1}{2} {d+k-1 \choose k-1}^2 \leq n-k.
\label{eq_323} \end{equation}
Let us abbreviate $A(U) := A_{\| \cdot \|_U} \in \RR^{\cM_d \times \cM_d}$. Write $A(U) = (A_{\alpha, \beta}(U))_{\alpha, \beta \in \cM_d}$
and recall that $A_{\alpha, \beta}(U) = A_{\beta, \alpha}(U)$.
Consider the map
\begin{equation}  W_{n,k} \ni U \mapsto ( A_{\alpha, \beta}(U) )_{(\alpha,\beta) \in E_d} \in \RR^{\ell} \label{eq_1209}
\end{equation}
for
\begin{equation} \ell = |E_d| \leq \frac{|\cM_d| \cdot (|M_d| - 1)}{2} \leq \frac{1}{2} {d + k-1 \choose k-1}^2 \leq  n-k. \label{eq_406} \end{equation}
The map defined in (\ref{eq_1209}) is continuous, as follows from Lemma \ref{lem_1209}. It is equivariant with respect to the group $G = \{ \pm 1 \}^k$, where the action of $G$
on $W_{n,k}$ is given by (\ref{eq_1148}),
and the action on $\RR^{\ell}$ is described by (\ref{eq_1133}). Observe that the last action is fixed-point-free, as we only consider $\alpha, \beta \in \cM_d$ with  $\alpha + \beta \not \in 2 \ZZ^k$.
We may apply Proposition \ref{prop_154} thanks to (\ref{eq_406}) and conclude that there exists $U \in W_{n,k}$ such that for
any $\alpha, \beta \in \cM_d$,
\begin{equation}  A_{\alpha, \beta}(U) = 0 \qquad \qquad \text{whenever} \ \alpha + \beta \not \in 2 \ZZ^k. \label{eq_427} \end{equation}
We fix such $U \in W_{n,k}$, and show that the norm $\| x \|_U$ is approximately an unconditional norm in $\RR^k$.
For $x \in \RR^k$ consider the $(2d)$-homogeneous polynomial,
$$ P(x) = \sum_{\alpha, \beta \in \cM_d} A_{\alpha, \beta}(U) x^{\alpha + \beta} \qquad \qquad \qquad (x \in \RR^k). $$
It follows from (\ref{eq_427}) that $P(x_1,\ldots,x_k) = P(\pm x_1,\ldots, \pm x_k)$ for any choice of signs.
In view of (\ref{eq_446}), the $\{ \pm 1 \}^k$-invariant, $(2d)$-homogeneous polynomial $P$ satisfies
\begin{equation}  P(x)^{1/(2d)} \leq \| x \|_U \leq |\cM_d|^{1/(2d)} \cdot P(x)^{1/(2d)} \qquad \qquad (x \in \RR^k). \label{eq_849} \end{equation}
Denote
\begin{equation}  \eps = |\cM_d|^{1/(2d)} - 1 \label{eq_357} \end{equation}
and set also $\|| x \|| = 2^{-n} \sum_{\delta \in \{\pm 1\}^k} \| (\delta_1 x_1,\ldots, \delta_k x_k) \|_U$. Then $\|| \cdot \||$ is an unconditional norm, and it follows from (\ref{eq_849}) that
$$ \frac{1}{1+\eps} \|| x \|| \leq \| x \|_U \leq (1 + \eps) \|| x \||
\qquad \text{for} \ x \in \RR^k. $$
Recall that $1-\eps \leq 1/(1 + \eps)$ provided that $0 < \eps < 1$.
Since $|\cM_d| = {d + k-1 \choose k-1}$ we deduce from (\ref{eq_357}) that assuming $d \geq k$,
\begin{equation} \eps \leq \left( 2 e \frac{d}{k} \right)^{\frac{k}{2d}} - 1 \leq C \frac{\log (d/k)}{d/k}
\leq \frac{\tilde{C}}{(d/k)^{2/3}}.
\label{eq_428} \end{equation}
We claim that we may choose $d \geq c k n^{\frac{1}{2(k-1)}}$
so that (\ref{eq_323}) would hold true. Indeed, we may assume that $k \leq n/2$ as otherwise there is nothing to prove. Thus (\ref{eq_323}) holds true whenever
$$ {d + k-1 \choose k-1}^2 \leq \left(2 e \frac{d}{k-1}  \right)^{2(k-1)} \leq  n. $$
Consequently, from (\ref{eq_428}), for any $k \geq 2$
the conclusion of the theorem holds true with any $0 < \eps < 1/2$ that satisfies
\begin{equation} \eps \leq C n^{-\frac{1}{3(k-1)}}
\label{eq_441} \end{equation}
for a universal constant $C > 0$. The number ``$3$'' in (\ref{eq_441}) may be replaced by any number $\alpha > 2$, at the expense of increasing the universal constant $C > 0$.	
\end{proof}

{\bf Remark.} In Bourgain-Lindenstrauss \cite{Bourgain}, Tikhomirov \cite{Tikhomirov} and Fresen \cite{Fresen},
one assumes that the given norm is invariant under the group $(\ZZ / 2 \ZZ)^n \rtimes S_n$ and concludes
the existence of approximately-spherical sections with good dependence on the degree of approximation. We note
here that it is easy to obtain $(\ZZ / 2 \ZZ)^n \rtimes S_n$-symmetric norms from $S_n$-symmetric norms,
by reducing the dimension by a factor of two. Specifically, if $\| \cdot \|$ is a permutation-invariant norm in $\RR^n$,
for even $n$, then for $m = n / 2$, the norm
$$ \|| (x_1,\ldots,x_{m}) \|| = \| (x_1, -x_1,x_2,-x_2,\ldots, x_{m}, -x_{m}) \| $$
is a $(\ZZ / 2 \ZZ)^m \rtimes S_m$-invariant norm on $\RR^m$.

\bigskip

\noindent
Department of Mathematics, Weizmann Institute of Science, Rehovot 76100, Israel. \\
{\it e-mails:} \verb"{boaz.klartag,tomer.novikov}@weizmann.ac.il"

\end{document}